\newcommand{\Ra}	{\Rightarrow}
\newcommand{\LRa}{\Leftrightarrow}
\newcommand{\eps}{\varepsilon}
\newcommand {\calC}	{{\cal C}}
\newcommand {\calI}	{{\cal I}}
\newcommand {\calM}	{{\cal M}}
\newcommand {\R}	{\mathbb{R}}
\newcommand {\N}	{\mathbb{N}}
\newcommand{\ds}{\displaystyle}
\newcommand{\g}{\text{\textbf{g}}}
\newtheorem{remark}{Remark} 
\title{Towards differential geometric characterization of slow invariant manifolds in extended state space: Sectional Curvature and Flow Invariance}
\author{Pascal Heiter\footnotemark[2]
        \and Dirk Lebiedz\footnotemark[2]}
\begin{document}

\maketitle

\renewcommand{\thefootnote}{\fnsymbol{footnote}} \footnotetext[2]{Institute for Numerical Mathematics, 
	Ulm University, Helmholtzstra\ss e 20, 89081 Ulm, Germany
  (\texttt{pascal.heiter@uni-ulm.de, dirk.lebiedz@uni-ulm.de}).}
  \renewcommand{\thefootnote}{\arabic{footnote}}

\begin{abstract}
  Some model reduction techniques for multiple time-scale dynamical systems make use of the identification of low dimensional slow invariant attracting manifolds (SIAM) in order to reduce the dimensionality of the { phase} space by restriction to the slow flow. The focus of this work is on a proposition and discussion of a general viewpoint using differential geometric concepts for submanifolds to deal with slow invariant manifolds in an extended { phase} space. The motivation is a coordinate independent formulation of the manifold properties and its characterization problem treating the manifold as intrinsic geometric object. We formulate a computationally verifiable necessary condition for the slow invariant manifold graph stated in terms of a differential geometric view on the invariance property. Its application to example systems is illustrated. In addition, we present some ideas and investigations concerning the search for sufficient, differential geometric conditions characterizing slow invariant manifolds based on our previously developed variational principle.
  \end{abstract}

\begin{keywords}
  model reduction, slow invariant attracting manifold, differential geometry, Riemannian curvature tensor, Gaussian and sectional curvature 
\end{keywords}

\begin{AMS}
  34D15, 34C30, 37C45, 37J15, 37K25, 53A55
\end{AMS}

\pagestyle{myheadings}
\thispagestyle{plain}
\markboth{P.\ Heiter and D.\ Lebiedz}{Towards diff.\ geom.\ characterization SIM}

\section{Introduction}

Multiple time scale dynamical systems occur widely in modeling natural processes such as chemical reactions often resulting in high-dimensional kinetic ordinary differential equation (ODE) systems. 
The time scales frequently range from nanoseconds to seconds and the numerical simulation of such stiff dynamical systems becomes very time consuming. 
This calls for appropriate model reduction techniques. 
Time scale separation of a dynamical system flow into fast and slow modes is the basis for many model and complexity reduction approaches. 
The fast relaxing modes are approximated by enslaving them to the slow modes via a mapping. 
The mathematical object related to this idea is the invariant attracting manifold of slow motion.

The first model reduction techniques have been the quasi-steady-state assumption (QSSA) \cite{Bodenstein1913, Chapman1913} and the partial equilibrium approximation (PEA) \cite{Michaelis1913}. 
In the QSSA approach, specific variables are supposed to be in steady state, by contrast in the PEA approach it is assumed that certain (fast) reactions are equilibriated. 
Both methods are still used nowadays due to their conceptual simplicity although more sophisticated methods have been developed. 
A very popular and widely used reduction technique is the intrinsic low-dimensional manifold (ILDM) method developed by Maas and Pope in 1992, cf.\ \cite{Maas1992}. 
All aforementioned approaches have in common, that the resulting manifold is not invariant. 
The computational singular perturbation (CSP) \cite{Lam1985, Lam1994} method, which is proposed by Lam in 1985, and equation free methods from Kevrekidis et al.\ \cite{Kevrekidis2003} as well as from Theodoropoulos et al.\ \cite{Theodoropoulos2000}, the relaxation redistribution method \cite{Chiavazzo2011} by Chiavazzo and Karlin and a finite-time Lyapunov exponents based method by Mease et al.\ \cite{Mease2016} are other popular model reduction techniques. 
Some prominent methods for the numerical computation of slow invariant manifolds in context of chemical kinetics are a method for generation of invariant grids \cite{Chiavazzo2007, Gorban2005} and the G-scheme framework by Valorani and Paolucci \cite{Valorani2009}. 
Further approaches are the invariant constrained equilibrium edge preimage curve (ICE-PIC) method introduced by Ren et al.\ \cite{Ren2005, Ren2006a}, the zero-derivative principle (ZDP) method presented by Gear, Zagaris et al.\ \cite{Gear2005, Zagaris2009}, the functional equation truncation (FET) approach by Roussel \cite{Roussel2006, Roussel2012} and methods by Adrover et al.\ \cite{Adrover2007, Adrover2007a} and Al-Khateeb et al.\ \cite{Al-Khateeb2009}. 
The flow curvature method (FCM) {\cite{Ginoux2006, Ginoux2008, Ginoux2009, Ginoux2014} presented by Ginoux}, which is also discussed in the work of Br\o{}ns et al.\ \cite{Brons2013}, is based on a {differential geometric} analysis of curves within the high-dimensional { phase} space. 
The main idea of approximating the codimension-1 slow invariant manifold is the annulation of generalized curvatures of curves based on Frenet frames. 
This interesting approach is based on curves and requires the computation of a determinant of the time derivatives of the state vector, invariance of the compute manifold follows then from the Darboux theorem. 
The work of Ginoux et al.\ is of particular significance in the context of this paper, it supported our inspiration to work on topics presented here although we have been concerned with differential geometry ideas in the context of slow invariant manifolds since some years \cite{Lebiedz2010} without knowing about Ginoux's work.
Our model reduction technique is based on a variational principle by using a trajectory-based optimization approach is proposed by Lebiedz et al.\ \cite{Lebiedz2004, Lebiedz2006b,Lebiedz2010,Lebiedz2011,Lebiedz2011a, Lebiedz2013a}, which is supposed to be applied to kinetic models in combustion chemistry \cite{Lebiedz2013, Lebiedz2014}. 
The recently published work of Lebiedz and Unger \cite{Lebiedz2016} discusses and exploits common ideas and brings together concepts of several model reduction approaches.

{
In \cite{Lebiedz2016} we study fundamental principles underlying various SIAM computation ideas, in particular we distinguish between methods that construct pointwise liftings to the manifold und those using the flow of the dynamical systems. The ZDP is a classical lifting method that does not use the flow but a local root finding criterion to approximate the SIAM. Whereas the ICE-PIC method in principle uses only the flow of the dynamical system computing a manifold point by solving (with a shooting approach) a boundary value problem involving an appropriate point on the edge of the manifold and following a solution trajectory up to a manifold point corresponding to the chosen parameterization values. Our boundary value problem introduced in \cite{Lebiedz2016} belongs to both classes in some respect, it uses the flow and a local criterion for lifting. However, the differential geometric approach based on { phase}-space-time manifolds presented here, is a local method, i.e. it belongs to the lifting class such as ZDP. The flow and flow-induced mappings are only used to characterize local geometric properties (in particular the time-sectional curvature) of the slow invariant manifold in time direction.

The multiscale issue related to a transition from microscopic to macroscopic models via sub has also been addressed by Transtrum et al. \cite{Transtrum2014, Transtrum2016}. Of particular interest in the context of our work is the fact that Transtrum also exploits differential geometric aspects and even established a connection to information geometry taking a statistical viewpoint where a Riemann metric is derived from Fisher-information-type. To study this viewpoint in terms of slow invariant attracting manifolds as approximations of many initial value problems might be an interesting issue for future research.

}

The geometric singular perturbation theory (GSPT) originated by Fenichel \cite{Fenichel1972,Fenichel1974,Fenichel1977,Fenichel1979}{, which is also presented in \cite{Jones1995, Kaper1999}, }is tailored for theoretical analysis of time scale separated systems in singularly perturbed form

\[
	\begin{array}{rcl}
		\ds \frac{d}{dt}x &=& \eps f(x,y,\eps) {\in \R^{n_x}} \\[0.25cm]
		\ds \frac{d}{dt}y &=& g(x,y,\eps) {\in \R^{n_y}}
	\end{array}
\]
omitting potential explicit time $t$-dependencies of the right hand side functions here, involving {$n_x,n_y \in \N,$} a parameter $\eps > 0$ and sufficiently smooth functions {$f: \R^{n_x} \times \R^{n_y} \times (0,\infty) \to \R^{n_x},g : \R^{n_x} \times \R^{n_y} \times (0,\infty) \to \R^{n_y}$.} Fenichel's theory provides a set of theorems to analyze and gain deeper understanding of those system in terms of the slow flow behaviour. The critical manifold $S_0$ is defined as the set of points, where $g(x,y,0) = 0$ holds. The slow invariant manifold, whose existence for sufficiently small $\eps$ has been proved by Fenichel, is defined as
\[
 	S_\eps := \{ (x,y) \;:\; y = h_\eps(x) \}
\]
with an asymptotic expansion in form of a graph
\[
	y = h_\eps(x) = h_0(x) + \eps h_1(x) + \eps^2 h_2(x) + ...
\]
whereby the coefficients $h_i(x)$ can be obtained by iteratively and recursively solving the invariance equation \begin{equation}
\label{eq:invariance}
 \eps Dh_\eps(x)f(x,h_\eps(x),\eps) - g(x,h_\eps(x),\eps) = 0.
\end{equation}

The aim of the present work is the proposition and establishment of a novel differential geometric viewpoint, in which the object of a slow invariant manifold \emph{might} be characterized by purely geometric, coordinate independent properties within an extended { phase}-space-time manifold. 
By doing so, we aim at avoiding a description with an asymptotic expansion in $\eps$ and the implications of a non-uniqueness of the slow invariant manifold defined by matched asymptotic expansion according to Fenichel's theory. 
The hope is, that the whole { phase}-space-time manifold spanned by solution trajectories of the ODE contains all required information to identify an invariant (and appropriately attracting) object, which matches with the Fenichel definition up to all orders in $\eps$. 
In particular, \Cref{sec:neccond} deals with a necessary condition based on a reformulation of the invariance equation \cref{eq:invariance} in terms of differential geometric property which has to be met by the slow invariant manifold. For this purpose we formulate the manifold as a graph over some (slow) variables and the time-axis.
In \Cref{sec:examples}, the theoretical result is applied to several well known examples in order to illustrate the necessary condition. 
Higher-dimensional models with analytically known slow invariant manifold are constructed and analyzed in the following. 
The lack of sufficiency is investigated and some considerations on additional criteria are presented in \Cref{sec:sufficient}. 
\Cref{sec:conclusion} contains a summary and an outlook focused on a way towards a complete characterization of SIAM of arbitrary dimension in terms of its differential geometry.

\section{Necessary Condition for Slow Invariant Manifolds}
\label{sec:neccond}
We state a necessary condition for slow invariant manifolds in a differential geometry context. Assume $\eps > 0, n,k \in \N$ with $k < n$ and consider a $(k,n-k)$-(slow,fast) system
{\begin{equation}
	\begin{array}{rcl}
		\ds \dot{x}(t) &=& f(x(t),y(t)) \in \R^k \\[0.2cm]
		\ds \eps \dot{y}(t) &=& g(x(t),y(t)) \in \R^{n-k}\\
	\end{array}
	\label{eq:sfsystem}
\end{equation}
with $x := (x_1,...,x_k), y := (y_{1},...,y_{n-k}), f := (f_1,...,f_k)$ and $g := (g_{1},...,g_{n-k}).$} We define a smooth immersion 
{\begin{equation}
	\begin{array}{lcll}
		\psi: &[0,\infty) \times \R^{k} &\to &\R^{n+1} \\
		 & (t^*,x^*) &\mapsto & (t^*,x^*,p(t^*,x^*; a))	
	\end{array}
	\label{eq:immersion}
\end{equation}}
involving a sufficiently smooth function
{\[
	\begin{array}{lcll}
		p: & [0,\infty) \times \R^k \times (\calC^\infty(\R^{k}))^{n-k} &\to &\R^{n-k} \\
		 & (t^*,x^*; a) &\mapsto & y^* = p(t^*,x^*; a)	
	\end{array}
\]
where $y^* = y(t^*)$ is the solution of the following boundary value problem (BVP)
\begin{equation}
\label{eq:bvp}
	\begin{cases}	\quad \dot{x}(t) = f(x(t),y(t)) \\
				\quad	\eps \dot{y}(t) = g(x(t),y(t))\\
				\quad	x(t^*) = x^* \\
				\quad	y(0) = a(x(0)).
	\end{cases}
\end{equation}
The function $a(\cdot)$ plays the role as of initial value function coupling the data $x(0)$ and $y(0)$, i.e. lifting the parameterizing values of the slow variables $x$ to a manifold point in the full space. \Cref{fig:P_bvp} illustrates the BVP viewpoint and the meaning of the initial value function (red curve) plotted into the $xy$-plane.
\begin{figure}[tbhp]
	\centering
	\includegraphics[scale=0.42]{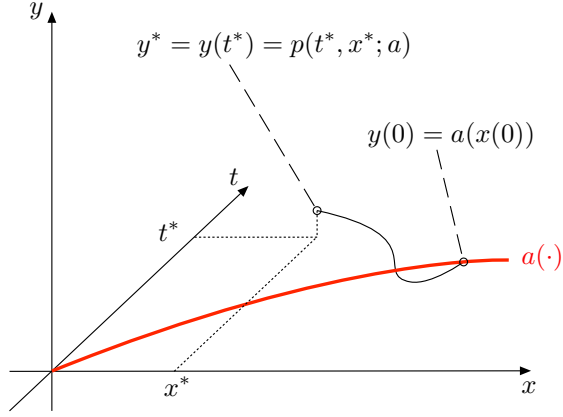}
	\caption{Illustration of the boundary value problem \cref{eq:bvp} with an initial value function $a(\cdot)$ (red curve) plotted into the $xy$-plane and a fixed point $(t^*,x^*)$ as well as {$y^* = p(t^*,x^*;a).$}}
	\label{fig:P_bvp}
\end{figure}
A submanifold is determined by a choice of an arbitrary initial value function $a(u)$ considering all possible initial values $u \in \R^k$ transported by the phase flow of ($\ref{eq:sfsystem}$).} We define the following $k+1$-dimensional manifold
{\[
	\calM  = \calM(a) := \left\{  (t^*,x^*,y^*) \in [0,\infty) \times \R^{n}\;:\; y^* = p(t^*,x^*;a)\right\}
\]}
embedded in { phase}-space-time $[0,\infty) \times \R^{n}$ and being defined as the graph of the time evolution with the flow of $\cref{eq:sfsystem}$ of {all possible initial values $(u,a(u)) \in \R^n, u \in \R^k.$} \Cref{fig:spacetime_ds} illustrates this issue in the case of an (1,1) system in a three-dimensional { phase}-space-time frame with two different initial value functions. 
{\begin{remark} The manifold $\calM$ can also be seen as the union of all trajectories with feasible initial values coupled via $a.$ Especially, it holds that $p(t^*,x^*;a)$ equals the solution of
\[
	\exists\; x_0 \in \mathbb{R}^k \,:\; \Phi^{t^*}(x_0,a(x_0)) = (x^*,y^*)
\] 
where $\Phi^{t^*}(x_0,a(x_0))$ denotes the flow map of the dynamical system \cref{eq:sfsystem} at time $t^*$ starting from the initial value $(x_0,a(x_0)).$
\end{remark}}
\begin{figure}[tbhp]
	\centering
	\subfloat{\includegraphics[scale=0.32]{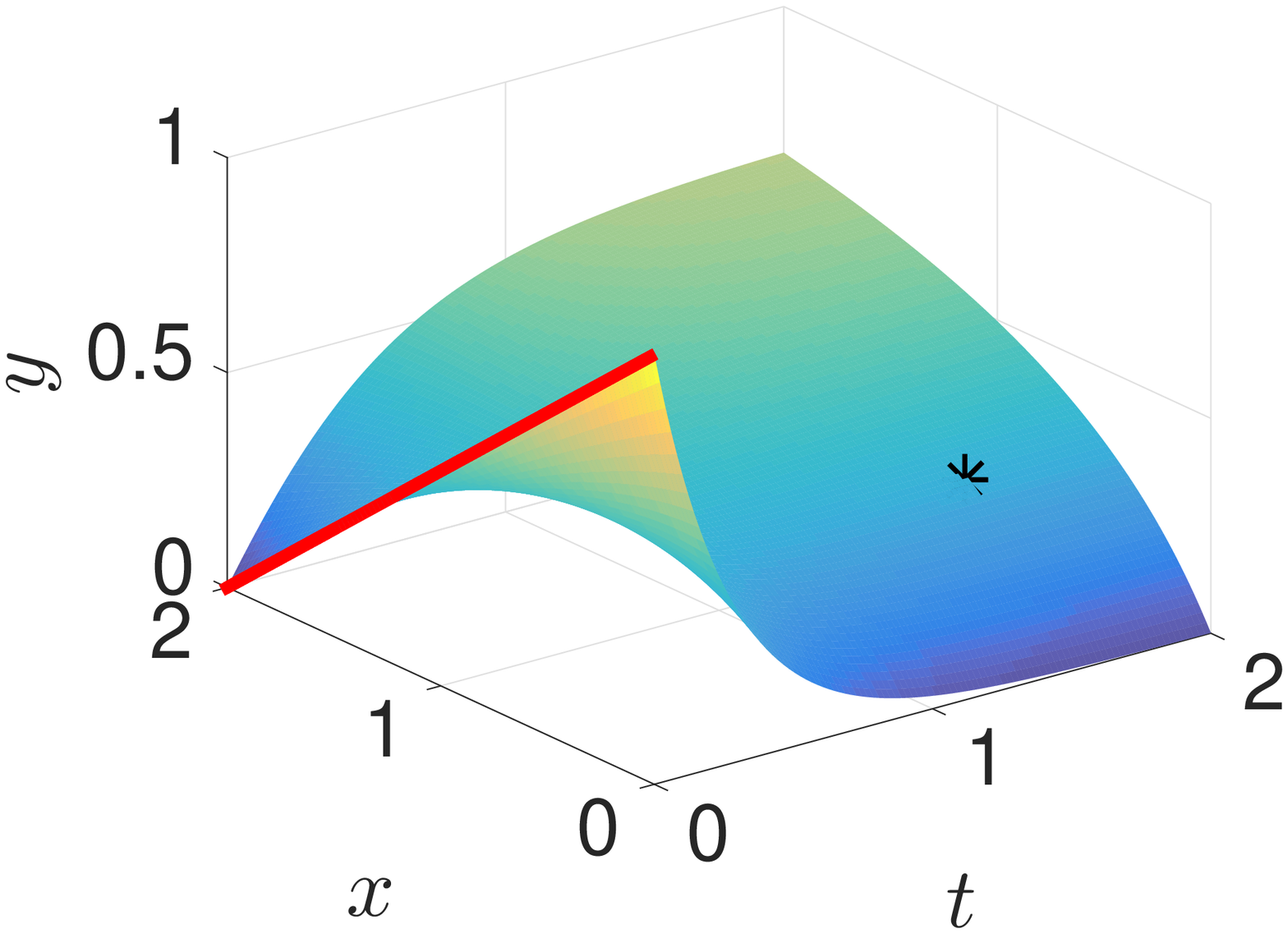}}
	\subfloat{\includegraphics[scale=0.32]{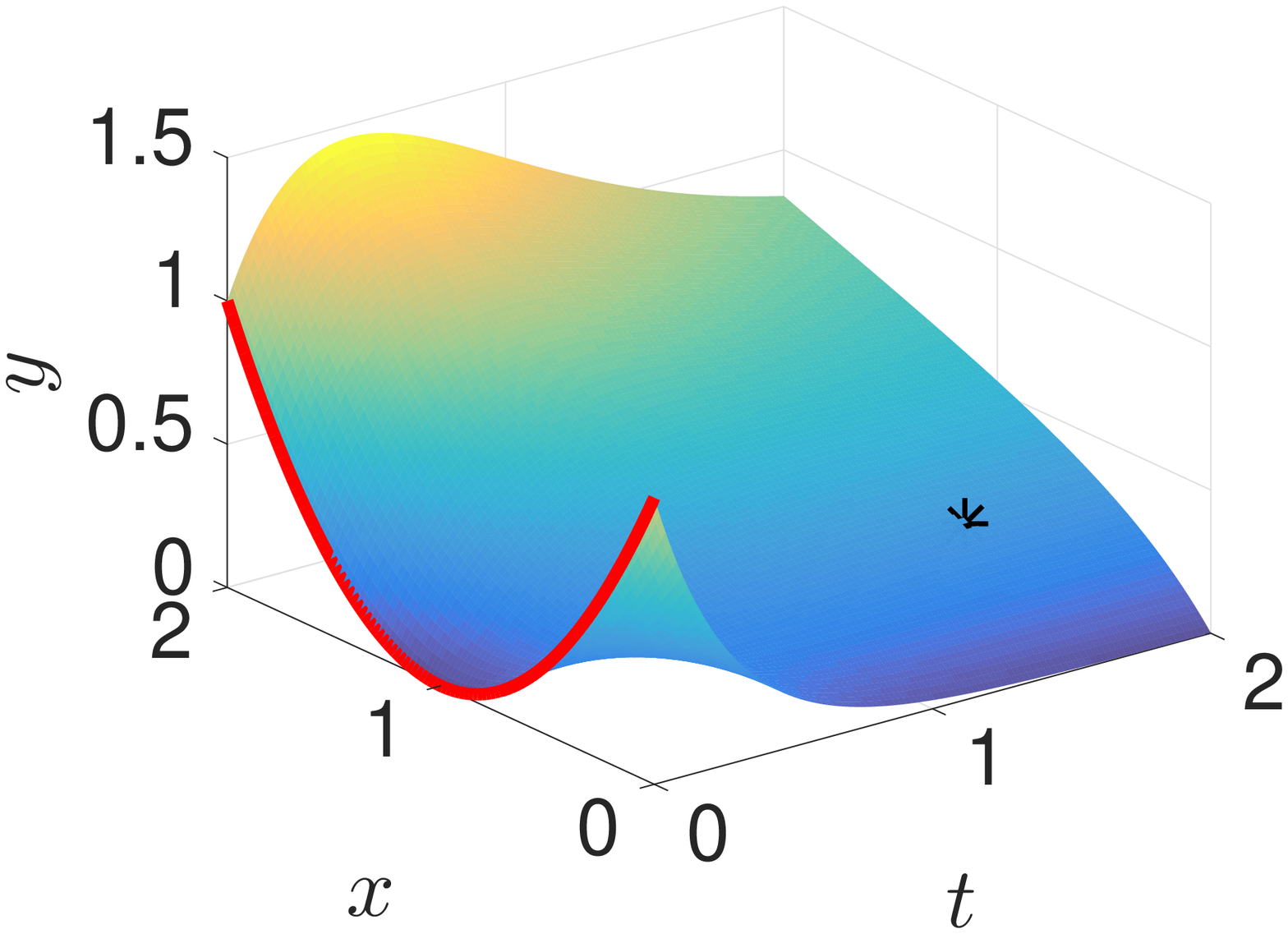}}
	\caption{Illustration of an (1,1) system in { phase}-space-time. Colored surfaces: Subsets of $\calM$ defined for {$(t^*,x^*) \in [0,2]^2$}. Red curves: Different initial value functions $a(u)$ for $u \in [0,2]$ plotted into the $xy$-plane. Black Mark: Selected point {$\psi(1.5,0.5) = (1.5,0.5,p(1.5,0.5;a)) \in \calM$}.}
	\label{fig:spacetime_ds}
\end{figure}
\begin{proposition}[Induced Metric Tensor]
\label{cor:metricTensor}
Consider the smooth immersion \cref{eq:immersion} \\and define
\[
	{\g} := J^TJ \qquad \text{ and } \qquad 
	J := 	\begin{pmatrix}
					{\frac{\partial}{\partial t^*}\psi, \frac{\partial}{\partial x^*_1}\psi,...,\frac{\partial}{\partial x^*_k}\psi}\\
			\end{pmatrix}.
\]
Then {$(\calM,\g)$} is a Riemannian manifold and {$\g$} is a metric tensor induced by the euclidean metric in $\R^{n+1}.$
\end{proposition}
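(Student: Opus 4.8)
The plan is to verify that $\g = J^{T}J$ enjoys the three defining properties of a Riemannian metric tensor --- smoothness of its component functions, symmetry, and positive-definiteness --- and then to recognize it as the pullback under $\psi$ of the Euclidean metric, i.e.\ the first fundamental form of $\calM$.

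First I would note that the first $k+1$ components of $\psi(t^{*},x^{*}) = (t^{*},x^{*},p(t^{*},x^{*};a))$ are the coordinates $(t^{*},x^{*})$ themselves, so $\psi$ is injective and admits the smooth left inverse given by projection onto the first $k+1$ coordinates; hence $\psi$ is not merely an immersion but a smooth embedding, and $\calM = \psi([0,\infty)\times\R^{k})$ is an embedded $(k+1)$-dimensional submanifold of $\R^{n+1}$ with $\psi$ a diffeomorphism onto it. This graph structure is what allows us to transfer the Euclidean geometry of $\R^{n+1}$ to $\calM$ and express it in the $(t^{*},x^{*})$-coordinates.

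Next I would use that the columns of $J$ are exactly the coordinate vector fields $\frac{\partial}{\partial t^{*}}\psi, \frac{\partial}{\partial x^{*}_{1}}\psi, \ldots, \frac{\partial}{\partial x^{*}_{k}}\psi$, which push the coordinate basis of the tangent space at $(t^{*},x^{*})$ forward to a basis of $T_{\psi(t^{*},x^{*})}\calM \subset \R^{n+1}$, linear independence here being precisely the immersion hypothesis. With $\langle\cdot,\cdot\rangle$ denoting the Euclidean inner product on $\R^{n+1}$, the metric induced on $\calM$ by restriction of $\langle\cdot,\cdot\rangle$ to its tangent spaces has, in the parameterization $\psi$, the component functions $\g_{ij} = \langle\partial_{i}\psi,\partial_{j}\psi\rangle = (J^{T}J)_{ij}$, which is the asserted formula. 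Smoothness of these components in $(t^{*},x^{*})$ follows at once from smoothness of $\psi$, and symmetry is automatic from $(J^{T}J)^{T} = J^{T}J$.

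For positive-definiteness, take any $v \in \R^{k+1}$ with $v \neq 0$; then $v^{T}\g v = v^{T}J^{T}Jv = \|Jv\|^{2} \geq 0$, and equality would force $Jv = 0$, contradicting the full column rank of $J$ guaranteed by the immersion property. Hence $\g$ is a smooth, symmetric, positive-definite $(0,2)$-tensor field on $\calM$, so $(\calM,\g)$ is a Riemannian manifold, and by construction $\g$ is the metric induced from the Euclidean metric of $\R^{n+1}$. I do not anticipate a genuine obstacle; the only point deserving a careful word is the passage from $\psi$ being an \emph{immersion} to $\calM$ being a bona fide embedded submanifold carrying an induced metric, which is settled by the graph form of $\psi$ recorded in the first step.
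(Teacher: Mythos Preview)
Your proof is correct and follows essentially the same route as the paper's: both establish symmetry of $\g = J^{T}J$ trivially and deduce positive-definiteness from the full column rank of $J$. The only cosmetic difference is that the paper re-derives $\mathrm{rank}\,J = k+1$ from the explicit block form $J = \begin{pmatrix} I \\ \star \end{pmatrix}$ rather than citing the immersion hypothesis, while you additionally spell out smoothness and the pullback interpretation, which the paper leaves implicit.
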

\begin{proof} Since we defined {$\g = J^T J$}, it holds that {$\g$} is the Gramian matrix of {$\{ \frac{\partial}{\partial t^*}\psi, \frac{\partial}{\partial x^*_1}\psi,...,\frac{\partial}{\partial x^*_k}\psi \}$} and symmetric by definition. 
Moreover, {$\g$} is positive semi definite. 
We have to show that {$\g$} is positive definite. It holds
\[
	J = 	\begin{pmatrix}
					I \\
					\star
			\end{pmatrix} \in \R^{n \times k+1}
\]
with the identity matrix $I \in \R^{k+1\times k+1}$ {and arbitrary values $\star$}. Thus $\text{rank}\,{J} = k+1$, which implies {$\det \g \not= 0.$}
Therefore, {$\g$ }is symmetric and positive definite.
\end{proof}
In the following, we denote by
\[
	{T_q}\calM = \text{span}\left\{ \frac{\partial}{\partial x^1}, \frac{\partial}{\partial x^2},...,\frac{\partial}{\partial x^{k+1}} \right\} := {\text{span}\left\{ \frac{\partial}{\partial t^*}\psi, \frac{\partial}{\partial x^*_1}\psi,...,\frac{\partial}{\partial x^*_k}\psi \right\}}
\]
the tangential space of $\calM$ {at point $q \in \calM$}, omit the explicit dependencies on all arguments of {$p(t^*,x^*;a)$} for simplicity and use the Einstein sum notation.
\begin{definition}[Time Sectional Curvatures]
Let {$(\calM,\g)$} be the Riemannian manifold defined in \cref{cor:metricTensor} and {$T_q\calM$} be its tangential space. We define
\[
	\sigma_i := \text{span}\left\{ \frac{\partial}{\partial x^1}, \frac{\partial}{\partial x^{i}} \right\} \subset {T_q\calM} \qquad i=2,...,k+1
\]
with $\dim \sigma_i = 2.$ Then, it holds for the sectional curvatures
\[
	K(\sigma_i) = \frac{R^{m}_{1ii} g_{m1}}{g_{11}g_{ii}-g_{1i}^2}
\]
whereas $R^{m}_{1ii}$ are the entries of the Riemann tensor and $g_{ij}$ are the entries of the metric tensor {$\g$}.
\end{definition}
The set $\{K(\sigma_i),i=2,...,k+1\}$ is the collection of sectional curvatures involving the time direction. The importance of such directions is highlighted in the following proposition.

\begin{proposition}[Necessary Condition for Invariant Manifolds]
\label{thm:neccond}
Consider \\system \cref{eq:sfsystem}. Fenichel's Theorem guarantees the existence of a slow invariant manifold 
\[
	S_\eps = {\left\{ (x,y) \in \R^n \;:\; y = h_\eps(x) \right\}}.
\]
Let $a(\cdot) = h_\eps(\cdot)$. Then, it holds for all ${q} \in \calM$ and $i=2,...,k+1$
\[
	K(\sigma_i) = 0.
\]
\end{proposition}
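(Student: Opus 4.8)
The plan is to show that when the initial value function coincides with the Fenichel graph $h_\eps$, the manifold $\calM$ is (locally) a product of the graph $S_\eps$ with the time axis in a very strong sense: because $S_\eps$ is flow-invariant, every trajectory starting on $S_\eps$ stays on $S_\eps$, so $p(t^*,x^*;a)=h_\eps(x^*)$ is actually \emph{independent of} $t^*$. I would first establish this key fact: take $(x_0,h_\eps(x_0))$, flow it to time $t^*$; by invariance of $S_\eps$ the image is $(x^*,h_\eps(x^*))$ for the corresponding $x^*=x^*(t^*,x_0)$, hence the defining relation $y^*=p(t^*,x^*;a)$ forces $p(t^*,x^*;h_\eps)=h_\eps(x^*)$ for all $t^*$ in the relevant range. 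This collapses the immersion \cref{eq:immersion} to $\psi(t^*,x^*)=(t^*,x^*,h_\eps(x^*))$, i.e. $\partial p/\partial t^* \equiv 0$.

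Next I would compute the metric tensor $\g=J^TJ$ from \cref{cor:metricTensor} with this simplified $\psi$. Writing $h:=h_\eps$ and $h_j:=\partial h/\partial x^*_j$, the tangent vectors are $\partial_{t^*}\psi=(1,0,\dots,0,0)$ (the last $n-k$ slots being $\partial_{t^*}p=0$) and $\partial_{x^*_j}\psi=(0,e_j,h_j)$. Hence $g_{11}=1$, $g_{1i}=0$ for $i=2,\dots,k+1$, and $g_{ij}$ for $i,j\ge 2$ is the induced metric on the graph $S_\eps$, namely $\delta_{i-1,j-1}+h_{i-1}\cdot h_{j-1}$. The crucial structural observation is that $\g$ is block diagonal, $\g=\mathrm{diag}(1,\tilde g)$, with $\tilde g$ \emph{independent of $t^*$}. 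This is the exact situation of a Riemannian product $\R_{t^*}\times(S_\eps,\tilde g)$ (more precisely, a warped product with trivial warping), for which the curvature in any plane containing the flat factor vanishes.

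Then I would invoke the standard curvature computation for such a metric: since $g_{11}=1$ is constant and $g_{1i}=0$, all Christoffel symbols with a $1$-index, $\Gamma^m_{1j}$ and $\Gamma^1_{ij}$, vanish (each is a sum of terms $\tfrac12 g^{m\ell}(\partial_i g_{\ell j}+\partial_j g_{\ell i}-\partial_\ell g_{ij})$ in which either a derivative $\partial_{x^1}=\partial_{t^*}$ hits a $t^*$-independent entry, or an index-$1$ component of $\g$ or $\g^{-1}$ appears and is constant/zero). Consequently every component $R^m_{1ii}$ of the Riemann tensor vanishes, and by the formula in the Definition of Time Sectional Curvatures, $K(\sigma_i)=R^m_{1ii}g_{m1}/(g_{11}g_{ii}-g_{1i}^2)=0$ for $i=2,\dots,k+1$ and at every $q\in\calM$. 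The denominator is nonzero by positive definiteness of $\g$ established in \cref{cor:metricTensor}.

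The main obstacle — really the only nontrivial point — is the first step: rigorously justifying that $p(t^*,x^*;h_\eps)$ is $t^*$-independent, which rests on Fenichel's invariance of $S_\eps$ together with well-definedness of the immersion (that for each $(t^*,x^*)$ in the domain there is a unique admissible $x_0$ with $\Phi^{t^*}(x_0,h_\eps(x_0))=(x^*,p)$, as in the Remark). One must also be slightly careful that the reduced slow flow $\dot x=f(x,h_\eps(x))$ is what governs $x^*(t^*,x_0)$, so the reparameterization $x_0\mapsto x^*$ is a local diffeomorphism and the domain on which $\psi$ is an immersion is exactly where this holds; everything after that is the routine block-diagonal curvature calculation sketched above. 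A clean way to present it is: reduce to $\partial_{t^*}p=0$, observe $\g$ is a Riemannian product metric, and cite/recompute that product metrics have vanishing sectional curvature on mixed planes.
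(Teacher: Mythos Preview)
Your proposal is correct and follows essentially the same route as the paper: both first establish $\partial_{t^*}p=0$ (you via flow invariance of $S_\eps$, the paper by differentiating $y(t)=p(t,x(t);h_\eps)$ and invoking the invariance equation \cref{eq:invariance}, which are equivalent statements), then observe that the induced metric is block diagonal $\g=\mathrm{diag}(1,P)$ with $P$ independent of $t^*$, and conclude that the time-sectional curvatures vanish. The only cosmetic difference is that you package the final step as the standard Riemannian-product fact, whereas the paper computes $\Gamma^1_{ij}=-\tfrac12\,\partial_{x^1}g_{ij}=0$ and hence $R^{1}_{1ii}=0$ explicitly.
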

\begin{proof} {We set $t = t^*$ and $x_i = x^*_i$ for simplicity. Consider 
\[
	h_\eps(x(t)) = y(t) = p(t,x(t);h_\eps).
\]
Differentiating this equation with respect to $t$ leads to
\[
	\begin{array}{rcl}
		\ds \frac{d}{dt} y(t) = \ds\dot{y}(t) &=& \ds \frac{d}{dt} p(t,x(t);h_\eps) \\[0.2cm]
		&=& \ds p_t(t,x(t);h_\eps) + p_{x}(t,x(t);h_\eps) \dot{x}(t) \\[0.1cm]
		&=& \ds p_t(t,x(t);h_\eps) + Dh_\eps(x(t))  \dot{x}(t).
	\end{array}
\]
which is equivalent to
\[
	\begin{array}{rcl}
	 	\ds p_t(t,x(t);h_\eps) &=&\ds \dot{y}(t) - Dh_\eps(x(t))  \dot{x}(t) \\[0.1cm]
		&=& \ds\frac{1}{\eps}g(x(t),y(t)) - Dh_\eps(x(t)) f(x(t),y(t))
	\end{array}	 
\]		
By using $h_\eps(x(t)) = y(t)$ and the invariance equation \eqref{eq:invariance}, we get
\[
	p_t =  \frac{\partial}{\partial t} p = 0 \qquad \LRa\qquad  \begin{pmatrix} \frac{\partial}{\partial t} p^{(1)} \\ \vdots \\  \frac{\partial}{\partial t} p^{(n-k)} \end{pmatrix} = 0.
\]}
We have to prove, that $p_t = 0$ implies $K(\sigma_i) = 0$ for all ${q} \in \calM$ and $i=2,...,k+1.$ The metric tensor is given by
\[
	{\g = \begin{pmatrix}
				1+  p^{(i)}_tp^{(i)}_t & p^{(i)}_tp^{(i)}_{x_1} & \dots & p^{(i)}_tp^{(i)}_{x_k} \\
				p^{(i)}_{x_1}p^{(i)}_t & 1+  p^{(i)}_{x_1}p^{(i)}_{x_1}  & \dots & p^{(i)}_{x_1}p^{(i)}_{x_k} \\
				          \vdots					&										&	\ddots & \vdots \\
				p^{(i)}_{x_k}p^{(i)}_t & p^{(i)}_{x_k}p^{(i)}_{x_1}  & \dots & 1+p^{(i)}_{x_k}p^{(i)}_{x_k} \\
		  \end{pmatrix}},
\]
denoting the partial derivatives with a subscript, i.e.\ ${p^{(i)}_{x_j} := \frac{\partial}{\partial x_j}p^{(i)}}.$ Because of $p_t^{(i)} = 0$ for all ${q} \in \calM$ and $i=1,...,n-k,$ we have
\[
	{\g} = 	\begin{pmatrix}
					1 & 0 \\
					0  & P
			\end{pmatrix}
\]
with $P \in \R^{k\times k}.$ The contravariant metric tensor is given by
\[
	{\g^{-1}} = \begin{pmatrix}
					1 & 0 \\
					0  & P^{-1}
			\end{pmatrix}.
\]
Note that since {$\g$} is symmetric and positive definite, all principal minors are positive and thus $P$ is regular. Further, it holds 
\[
	K(\sigma_i) = \frac{R^{m}_{1ii} g_{m1}}{g_{11}g_{ii}-g_{1i}^2} = \frac{R^{1}_{1ii} g_{11}}{g_{11}g_{ii}-g_{1i}^2} = \frac{R^{1}_{1ii}}{g_{ii}},
\]
because $g_{1m} = 0$ for $m = 2,...,k+1.$ The remaining task is to show, that it holds both $R^{1}_{1ii} = 0$ and $g_{ii} \not= 0$ for all $i = 2,...,k+1.$ The latter issue is trivial, since
$g_{ii} = 1 + \sum_{\ell=1}^{n-k}{(p^{(\ell)}_{x_{i-1}})^2} > 0.$ Moreover, the entries of the Riemann tensor are given by
\[
	R^{1}_{1ii} = \frac{\partial}{\partial x^1} \Gamma^{1}_{ii} -  \frac{\partial}{\partial x^i} \Gamma^{1}_{1i} +  \Gamma^{\ell}_{ii}\Gamma^{1}_{1\ell} - \Gamma^{\ell}_{1i}\Gamma^{1}_{i\ell}
\]
and while using $g_{1m} = 0$ for $m = 2,...,k+1$ once again the Christoffel symbols read 
\[
	\begin{array}{rcl}
		\ds \Gamma^{1}_{ij} &=&  \ds \frac{1}{2} g^{1\ell} \left(\frac{\partial}{\partial x^i} g_{j\ell} + \frac{\partial}{\partial x^j} g_{i\ell} - \frac{\partial}{\partial x^\ell} g_{ij}\right) \\
				&=&\ds \frac{1}{2} g^{11} \left(\frac{\partial}{\partial x^i} g_{j1} + \frac{\partial}{\partial x^j} g_{i1} - \frac{\partial}{\partial x^1} g_{ij}\right) \\
				&=&\ds -\frac{1}{2} \frac{\partial}{\partial x^1} g_{ij}.
	\end{array}
\]
Based on $p_t^{(i)} = 0$ for all $i = 1,...,n-k,$ we conclude 
\[
	\frac{\partial}{\partial t} {\g} = \frac{\partial}{\partial x^1} \begin{pmatrix}
					1 & 0 \\
					0  & P
			\end{pmatrix}
			= \begin{pmatrix}
					0 & 0 \\
					0  & \frac{\partial}{\partial x^1}P
			\end{pmatrix}
			= \begin{pmatrix}
					0 & 0 \\
					0  & 0
			\end{pmatrix}
\]
because it holds
\[
	\frac{\partial}{\partial x^1} P_{ij} = \frac{\partial}{\partial t} P_{ij} = \frac{\partial}{\partial t}  \sum_{\ell=1}^{n-k} {p^{(\ell)}_{x_{i}}p^{(\ell)}_{x_{j}}} = \sum_{\ell=1}^{n-k} {( \underbrace{p^{(\ell)}_{x_{i}t}}_{=0}p^{(\ell)}_{x_{j}} + p^{(\ell)}_{x_{i}} \underbrace{p^{(\ell)}_{x_{j}t}}_{=0})}  = 0
\]
using the commutation of second derivatives. Therefore, we have $\Gamma^{1}_{ij} = 0$ for all $i,j=1,...,k+1$ and thus $R^{1}_{1ii} = 0.$ 
\end{proof}
\begin{corollary}[Gaussian Curvature] \label{cor:gaussian} A special case of \cref{thm:neccond} for $(1,1)$ systems leads to the following statement: If $a(\cdot) = h_\eps(\cdot)$ holds, then the gaussian curvature vanishes for all ${q} \in \calM.$
\end{corollary}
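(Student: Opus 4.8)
The plan is to obtain \cref{cor:gaussian} as the lowest-dimensional instance of \cref{thm:neccond}, combined with the classical fact that on a two-dimensional Riemannian manifold the sectional curvature of the unique tangent plane coincides with the Gaussian curvature.

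First I would specialize the setting of \cref{thm:neccond} to a $(1,1)$ system, so that $k=1$ and $n=2$. Then the immersion \cref{eq:immersion} maps $[0,\infty)\times\R$ into $\R^{3}$, and $\calM=\calM(a)$ is a two-dimensional surface embedded in phase-space-time $[0,\infty)\times\R^{2}$, carrying the induced metric $\g$ of \cref{cor:metricTensor}. Its tangent space is $T_{q}\calM=\text{span}\{\frac{\partial}{\partial x^{1}},\frac{\partial}{\partial x^{2}}\}=\text{span}\{\frac{\partial}{\partial t^{*}}\psi,\frac{\partial}{\partial x^{*}_{1}}\psi\}$, which is itself two-dimensional. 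Consequently the index range $i=2,\dots,k+1$ in \cref{thm:neccond} collapses to the single value $i=2$, and the $2$-plane $\sigma_{2}=\text{span}\{\frac{\partial}{\partial x^{1}},\frac{\partial}{\partial x^{2}}\}$ is all of $T_{q}\calM$.

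Next I would apply \cref{thm:neccond} with the choice $a(\cdot)=h_{\eps}(\cdot)$, which yields $K(\sigma_{2})=0$ for every $q\in\calM$. It then remains only to identify $K(\sigma_{2})$ with the Gaussian curvature of the surface $\calM\subset\R^{3}$. By the Gauss equation (Theorema Egregium), the intrinsic expression $R^{m}_{122}g_{m1}/(g_{11}g_{22}-g_{12}^{2})$, which is precisely $K(\sigma_{2})$, equals the extrinsic Gaussian curvature of a surface in Euclidean three-space, i.e.\ the determinant of its shape operator, equivalently the product of its two principal curvatures. Alternatively, one may simply note that the proof of \cref{thm:neccond} already produces $\g=\mathrm{diag}(1,P)$ with $P$ a positive, $t$-independent scalar function in the $(1,1)$ case, so that the substitution $d\tilde{x}=\sqrt{P}\,dx$ turns $\g$ into the Euclidean metric and exhibits $\calM$ as intrinsically flat. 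Either way, the Gaussian curvature of $\calM$ vanishes identically.

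I expect the main obstacle to be nothing more than making the identification of the intrinsic sectional curvature with the extrinsic Gaussian curvature precise and citing the appropriate form of the Gauss equation; all the genuine differential-geometric work was already carried out in the proof of \cref{thm:neccond}, and restricting to a $(1,1)$ system merely reduces the family of time-sectional curvatures to a single member.
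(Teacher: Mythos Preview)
Your proposal is correct and matches the paper's intent: the corollary is stated without a separate proof, precisely because it is the $k=1$ instance of \cref{thm:neccond}, where the single time sectional curvature $K(\sigma_{2})$ coincides with the Gaussian curvature of the surface $\calM\subset\R^{3}$. Your additional remark that the metric reduces to $\mathrm{diag}(1,P)$ with $P$ a $t$-independent scalar, hence intrinsically flat, is a pleasant bonus not spelled out in the paper but entirely in its spirit.
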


\section{Examples and Results}
\label{sec:examples}
In this section, we discuss a few examples and apply the theoretical results of \Cref{sec:neccond} to demonstrate how the immersion defined in \cref{eq:immersion} can be used in applications. We discuss well known examples frequently used as test models for SIAM computation and develop similar higher-dimensional examples for illustration purposes. Further examples can be found in \cite{Heiter2017}.

\subsection{(1,1)-Davis-Skodje Model}
\label{sec:sub:ds}
The Davis-Skodje model (cf.\ \cite{DavisSkodje1999}) is widely used for analysis and performance tests for manifold based model reduction techniques identifying slow invariant manifolds. The system reads
\begin{equation}
	\begin{array}{rcl}
		\ds \frac{d}{dt}x(t) &=& \ds -x(t) =: f(x(t),y(t))\\[0.25cm]
		\ds \frac{d}{dt}y(t) &=& \ds -\gamma y(t) + \frac{(\gamma -1) x(t) + \gamma x(t)^2}{(1+x(t))^2} =: g(x(t),y(t))
	\end{array}
\label{eq:ds}	
\end{equation}
with $\gamma := \eps^{-1} > 1, \eps > 0$ being the time scale separation parameter. The solution of this system is
\begin{eqnarray*}
	x(t) &=& c_1 e^{-t} \\[0.25cm]
	y(t) &=& c_2 e^{-\gamma t} + \frac{c_1}{c_1 + e^t}	
\end{eqnarray*}
with constants $c_1, c_2 \in \R$ depending on an initial value. Further, the slow invariant manifold is analytically known
\[
	S_\eps = \left\{ (x,y) \in \R^2 :  y = h_\eps(x) = \frac{x}{x+1} \right\} = S_0
\]
and identical with the critical manifold. The parameterization $p$ is given by
\[
	y = {p(t,x;a)} = a(xe^t) e^{-\gamma t} - \frac{xe^{(1-\gamma)t}}{xe^t +1} + \frac{x}{x + 1}.
\]
while using {$x$ as slow variable and $y$ as fast variable}. \Cref{fig:spacetime_ds} depicts the parameterization $p(t,x;a)$ for $a(u) = 1-\frac{1}{2}u$ (left) and $a(u) = (u-1)^2$ (right) for $\gamma = 3.5$ and $(t,x) \in [0,2]^2.$ 
Based on \cref{cor:gaussian}, we focus on the calculation of the gaussian curvature via the Riemann tensor identifying $(x^1,x^2,x^3) := (t,x,y).$ 
We omit the explicit dependencies due to simplicity. 
The induced metric tensor of \cref{cor:metricTensor} is
\[
	{\g} = 		\begin{pmatrix} 
							1 + p^2_t & p_tp_{x} \\ p_tp_{x} &  1 + p^2_{x}
			\end{pmatrix}
\]
with
\[
	p_t =  \frac{d}{dt} {p(t,x;a)} = a'(xe^t)x e^{(1-\gamma) t}-a(x e^t) \gamma e^{-\gamma t}- \frac{x (1-\gamma) e^{(1-\gamma) t}}{x e^t+1}+ \frac{x^2 e^{(2-\gamma) t}}{(x e^t+1)^2}
\]
and
\[	
	p_{x} = \frac{d}{dx} {p(t,x;a)} = a'(x e^t)e^{(1-\gamma) t}- \frac{e^{(1-\gamma) t}}{x e^t+1}+ \frac{x e^{(2-\gamma) t}}{(x e^t+1)^2}+\frac{1}{(x+1)}- \frac{x}{(x+1)^2}.
\]
Then, it holds for the time sectional curvature
\[
	K(\sigma_2) = \frac{R^{m}_{122} g_{m1}}{g_{11}g_{22}-g_{12}^2} = \frac{R^{1}_{122} g_{11} + R^{2}_{122} g_{12}}{g_{11}g_{22}-g_{12}^2} = \frac{p_{tt} p_{xx} - p_{xt}^2}{(1+p_t^2 + p_{x}^2)^2}.
\]
We investigate which initial value function $a$ cancels the time derivative of $p.$ Therefore, we try to find a function $a$ such that
\[
	p_t =  a'(xe^t)x e^{(1-\gamma) t}-a(x e^t) \gamma e^{-\gamma t}- \frac{x (1-\gamma) e^{(1-\gamma) t}}{x e^t+1}+ \frac{x^2 e^{(2-\gamma) t}}{(x e^t+1)^2} \stackrel{!}{=} 0.
\]
Substituting $u := x e^t \Ra x = u e^{-t}$ and multiplying with $e^{\gamma t}$ leads to the differential equation 
\[
	a'(u) u - \gamma a(u) + \frac{(\gamma-1) u}{u +1} + \frac{u^2}{(u+1)^2} = 0
\]
whose solution
\[
	a(u) = \frac{u}{u+1} + c u^\gamma \qquad ,\;c \in \R
\]
describes invariant graphs under the flow. Note that the constant $c$ is arbitrary. Such solutions annul $p_t$ and therefore the gaussian curvature $K(\sigma_2) = 0.$ Especially, we obtain $a = h_\eps$ by setting $c = 0.$ 

\subsection{(1,1)-Nonlinear Model}
\label{sec:sub:kuehn_nlm}
The following example is taken from \cite{Kuehn2015} (Example 3.1.6). Consider the (1,1) system 
\[
	\begin{array}{rcl}
		\ds \frac{d}{dt}x(t) &=& \ds - \eps x(t)  \\[0.25cm]
		\ds \frac{d}{dt}y(t) &=& \ds x^2(t) - y(t)
	\end{array}
\]
with {$\eps > 0$ having} the solution
\begin{eqnarray*}
	x(t) &=& c_1 e^{-\eps t} \\[0.25cm]
	y(t) &=& \left(c_2 - \frac{c^2_1}{1-2\eps}\right) e^{-t} +  \frac{c^2_1}{1-2\eps} e^{-2\eps t}
\end{eqnarray*}
with constants $c_1,c_2 \in \R$ depending on an initial value. In contrast to the (1,1)-Davis-Skodje model, the critical manifold
\[
	S_0 = \left\{ (x,y) \in \R^2 :  y = h_0(x) = x^2 \right\}
\]
is not equal to the slow invariant manifold
\[
	S_\eps = \left\{ (x,y) \in \R^2 :  y = h_\eps(x) = \frac{x^2}{1-2\eps} \right\}
\]
enabling to check the necessary condition of \cref{thm:neccond} with both $h_0$ and $ h_\eps.$ The required parameterization $p$ is given by
\[
	y = {p(t,x;a)} = a(xe^{\eps t}) e^{-t} - \frac{x^2 e^{(2\eps -1)t}}{1-2\eps} + \frac{x^2}{1-2\eps}.
\]
The induced metric tensor, the Riemann tensor and the time sectional curvature $K(\sigma_2)$ can be calculated similar to \Cref{sec:sub:ds}. Consider the equation
\[
	p_t = a'(x e^{\eps t}) x \eps e^{(\eps-1) t} - a(x e^{\eps t}) e^{-t} + x^2 e^{(2 \eps-1) t} \stackrel{!}{=} 0
\]
which identifies the initial value functions annulling the gaussian curvature. 
Substituting $u := x e^{\eps t} \Ra x = u e^{-\eps t}$ and multiplying with $e^{t}$ leads to the differential equation
\[
	a'(u) u \eps - a(u) + u^2 = 0.
\]
The solution is
\[
	a(u) = \frac{u^2}{1-2\eps} + c u^{\eps^{-1}}
\]
with an arbitrary constant $c \in \R.$ Therefore, we obtain $a = h_\eps$ by setting $c = 0$ and further it holds $K(\sigma_2) = 0$ for all points on $\calM.$ Notice, that $h_0$ does not have such a structure and the gaussian curvature does not vanish for all points of $\calM$. In particular for 
\[
	(t^*,x^*,{p(t^*,x^*;h_0)}) \in \calM = \{ (t,x,y) \in \R^3 \;:\; {y = p(t,x;h_0)} \}
\]
with $\eps = 0.01, t^* = 0, x^* = 0.5$ it holds 
\[
	K(\sigma_2)(0,0.5,0.25) \approx -0.00255 \neq 0.
\]
Hence, the necessary condition of \cref{thm:neccond} is not fulfilled for the critical manifold in this case.

\subsection{(1,1)-Enzym-Kinetic Model}
The Michaelis-Menten-Henri model for enzym kinetics reads
\begin{eqnarray*}
	\frac{d}{dt}x(t) &=& \eps \big(-x(t) + (x(t) + \kappa - \lambda) y(t)\big) \\[0.25cm]
	\frac{d}{dt}y(t) &=& x(t) - (x(t) + \kappa)y(t),
\end{eqnarray*}
cf.\ \cite{Kuehn2015}, Example 11.2.4, assuming $\kappa > \lambda > 0$ and $(x(t),y(t)) \in \R^+ \times \R^+$ for all $t.$ In contrast to the presented examples in \Cref{sec:sub:ds} and \Cref{sec:sub:kuehn_nlm}, the differential equation is not analytically solvable anymore. Further, the critical manifold
\[
	S_0 = \left\{ (x,y) \in \R^2 :  y = h_0(x) = \frac{x}{x+\kappa} \right\}
\]
does not coincide with the slow invariant manifold also in this example. The graph of the slow invariant manifold can be computed via an asymptotic expansion for $h_\eps.$ Thus, we have
\[
	S_\eps = \left\{ (x,y) \in \R^2 :  y = h_\eps(x) = \sum_{k=0}^\infty  \eps^k h_k(x)\right\}
\]
and the coefficients $h_k$ can be determined using the invariance equation \cref{eq:invariance}. 

Since an analytical solution for the system is not known, the parameterization $p$ can only be obtained by solving the boundary value problem \cref{eq:bvp} {which in this case reads} for
\begin{equation*}
	\text{(BVP)} \begin{cases}	\dot{x}(t) = \eps \big(-x(t) + (x(t) + \kappa - \lambda) y(t)\big) \\
					\dot{y}(t) = x(t) - (x(t) + \kappa)y(t) \\
					x(t^*) = x^* \\
					y(0) = a(x(0))
	\end{cases}
\end{equation*}
$t \in [0,t^*]$ and depending on an initial value function $a$ and a fixed point $(t^*,x^*).$ {We solve those problems with \textsc{Matlab}'s \texttt{bvp4c} method.} The solution $y^* = y(t^*)$ of this boundary value problem provides the required parameterization
\[
	y^* = {p(t^*,x^*;a)} = \arg \text{BVP}(y(t^*)).
\]

Thereby, the induced metric tensor, the Riemann tensor and the time sectional curvature $K(\sigma_2)$ can be calculated by utilizing central differences to approximate the required derivatives. 
\cref{fig:enzymGaussian} depicts the gaussian curvature $K(\sigma_2){(t^*,x^*)}$ (colored surface) on a reference domain $\Omega = [0,2] \times [0,3]$ for $a(u) = 0.5$ (red thick line). 
\begin{figure}[tbhp]
	\centering
	\includegraphics[scale=0.50]{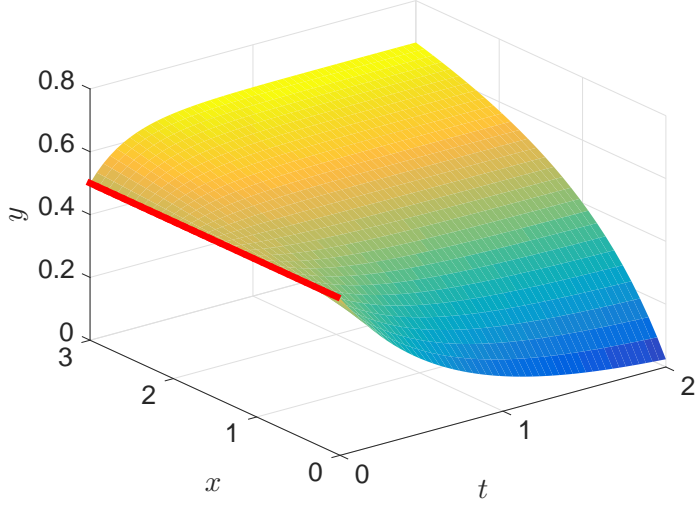}
	\caption{Illustration of an (1,1)-enzym kinetic model in space-time with $\lambda = 0.5, \kappa = 1.5$ and $\eps = 0.01$. Colored Surface: $\calM$. Red Curve: Graph of initial value functions $a(u) = 0.5$ {projected into the $xy$-plane}. The BVPs for each point are solved with \textsc{Matlab}'s \texttt{bvp4c} method.}
	\label{fig:enzymGaussian}
\end{figure}
Let be $\Omega_h$ a discretization of $\Omega$. In order to demonstrate the necessary condition for this example, we calculate 
\[
	\calI[a] = \int_\Omega |K(\sigma_2)(t,x) | \;d(t,x) \approx \frac{1}{| \Omega_h | } \sum_{(t_h,{x_h}) \in \Omega_h}  |K(\sigma_2)(t_h,x_h) |
\]
for several initial value functions 
\[
	a_k(u) := \sum_{\ell=0}^k \eps^\ell h_\ell(u)
\]
approximating $h_\eps.$ We choose  $\lambda = 0.5, \kappa = 1.0, \eps = 0.5$ and a grid with equidistant discretized axis 
\[
	\Omega_h = \{t_1,t_2,...,t_{n_t} \} \times \{x_1,x_2,...,x_{n_x} \}
\]
with $n_t = 10$ and $n_x = 20.$

 \cref{fig:enzymEpsSeries} illustrates for increasing $k$ the decrease of $\calI[a_k]$ validating the necessary condition of \cref{thm:neccond}. It must hold $\calI[a_k] \to 0$ for $k \to \infty.$

\begin{figure}[tbhp]
	\centering
	\includegraphics[scale=0.23]{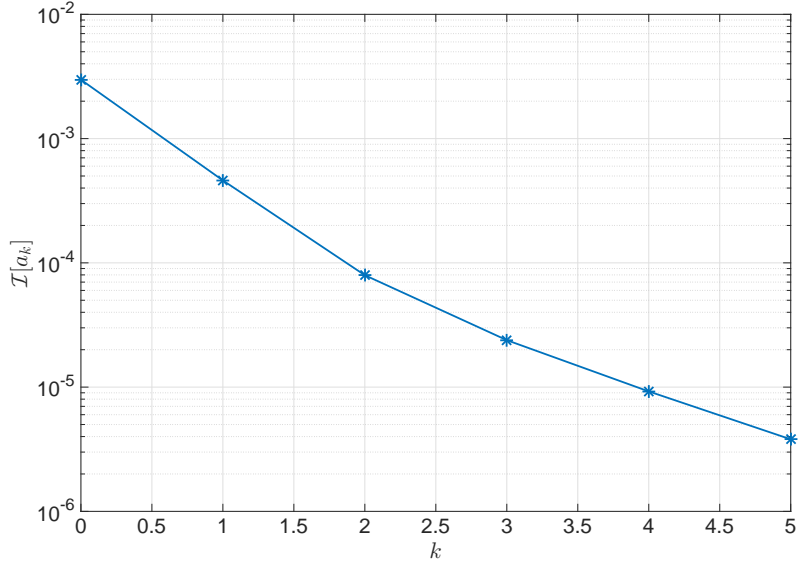}
	\caption{Validation of the necessary condition by means of the (1-1)-enzym kinetic model. The decrease of  $\calI[a_k]$ is plotted for $k = 0,...,5$ having  $\lambda = 0.5, \kappa = 1.0$ and $\eps = 0.5$ as model parameters.}
	\label{fig:enzymEpsSeries}
\end{figure}

\subsection{(2,1)-Higher-dimensional Model I}
We discuss a higher dimensional (2,1) system to demonstrate the application of \cref{thm:neccond} to a model with two slow and one fast variable in the case, that the critical manifold coincides with the slow invariant manifold. Consider a (2,1) system which is based on the Davis-Skodje model and extended with another slow variable. The system reads
\[
	\begin{array}{rcl}
		\ds \frac{d}{dt}x_1(t) &=& \ds - x_1(t) \\
		\ds \frac{d}{dt}x_2(t) &=& \ds - 2x_2(t) \\
		\ds \frac{d}{dt}y(t) &=& \ds  -\gamma y(t) + \frac{(\gamma-1)x_1(t)+ \gamma x^2_1(t)}{(1+x_1(t))^2} + \frac{2(\gamma-2)x_2(t) + 2\gamma x^2_2(t)}{(1+x_2(t))^2}
	\end{array}
\]
with $\gamma := \eps^{-1} > 2, \eps > 0$ being the time scale separation parameter. The system has the following solution
\begin{eqnarray*}
	x_1(t) &=& c_1 e^{-t} \\[0.25cm]
	x_2(t) &=& c_2 e^{-2t}  \\[0.25cm]	
	y(t) &=& c_3 e^{-\gamma t} + \frac{c_1}{c_1+1} + \frac{2c_2}{c_2+1}
\end{eqnarray*}
with constants $c_1,c_2,c_3 \in \R$ depending on an initial value. In this example, the slow invariant manifold 
\[
	S_\eps = \left\{ (x_1,x_2,y) \in \R^3 \;:\; y = h_\eps(x_1,x_2) = \frac{x_1 + 2x_2 + 3 x_1x_2}{(1+x_1)(1+x_2)} \right\} = S_0
\]
coincides with the critical manifold as they do in the Davis-Skodje model. The required parameterization $p$ is given by
\[
	y = {p(t,x_1,x_2;a)} = \left( a(x_1e^{t},x_2e^{2t})  - \frac{x_1e^{t}}{1+x_1e^{t}} - \frac{2x_2e^{2t}}{1+x_2e^{2t}}\right) e^{-\gamma t} + \frac{x_1}{1 + x_1} + \frac{2 x_2}{1+x_2} .
\]
The initial value function $a = a(u_1,u_2)$ depends on two variables. We identify $(x^1,x^2,x^3,x^4) = (t,x_1,x_2,y)$ and omit the explicit dependencies because of simplicity, once again. The induced metric tensor is
\[
	{\g} = 		\begin{pmatrix} 
							1 + p^2_t & p_tp_{x_1} & p_tp_{x_2} \\ p_tp_{x_1} &  1 + p^2_{x_1} & p_{x_1}p_{x_2} \\ p_{t}p_{x_2} & p_{x_1}p_{x_2} & 1 + p^2_{x_2}							
			\end{pmatrix}
\]
and the time sectional curvatures are given by
\begin{eqnarray*}
	K(\sigma_2) &=& \frac{R^{m}_{122} g_{m1}}{g_{11}g_{22}-g_{12}^2} = \frac{R^{1}_{122} g_{11} + R^{2}_{122} g_{12} + R^{3}_{122} g_{13}}{g_{11}g_{22}-g_{12}^2}  \\
	K(\sigma_3) &=& \frac{R^{m}_{133} g_{m1}}{g_{11}g_{33}-g_{13}^2} = \frac{R^{1}_{133} g_{11} + R^{2}_{133} g_{12} + R^{3}_{133} g_{13}}{g_{11}g_{33}-g_{13}^2}.
\end{eqnarray*}
We investigate the partial differential equation
\begin{eqnarray*}
	p_t  &=&  \left(\partial_1 a(x_1e^t,x_2e^{2t}) x_1e^t  + 2 \partial_2 a(x_1e^t,x_2e^{2t}) x_2e^{2t} - \frac{x_1 e^t}{x_1 e^t + 1} \right. \\
	&& \left.+ \frac{x_1^2 e^{2t}}{(x_1 e^t+1)^2} - \frac{4x_2e^{2t}}{x_2e^{2t}+1} + \frac{4 x_2^2 (e^{4 t}}{(x_2 e^{2 t}+1)^2}\right) e^{-\gamma t} \\
	&& - \left(  a(x_1e^t,x_2e^{2t}) - \frac{x_1e^{t}}{1+x_1e^{t}} - \frac{2x_2e^{2t}}{1+x_2e^{2t}} \right)\gamma e^{-\gamma t} = 0
\end{eqnarray*}
in order to identify the time invariants. We denote $\partial_k a$ for the partial derivative with respect to the $k$-th argument. Substituting $u_1 := x_1 e^{t}, u_2 := x_2 e^{2t}$ and multiplying with $e^{\gamma t}$ leads to
\begin{eqnarray*}
	0 &=& \partial_1a(u_1,u_2) u_1 + 2 \partial_2 a(u_1,u_2) u_2 - \frac{u_1}{(u_1 + 1)^2} - \frac{4u_2}{(u_2+1)^2}  \\ 
	&&	- \gamma \left(a(u_1,u_2) - \frac{u_1}{u_1+1} - \frac{2u_2}{u_2 +1} \right).
\end{eqnarray*}
The solution is
\[
	a(u_1,u_2) = \frac{u_1 + 2u_2 + 3 u_1u_2}{(1+u_1)(1+u_2)}  + v\left(\frac{u_2}{u^2_1}\right) u_1^{\gamma}
\]
with an arbitrary, sufficiently differentiable function $v(\cdot).$ We get $a = h_\eps$ by setting $v(\cdot) \equiv 0$ resulting in $K(\sigma_2) = 0$ and $K(\sigma_3) = 0$ for all points on $\calM.$

\subsection{(3,2)-Higher-dimensional Model II} The last example discusses a high\-er-dimensional model with three slow and two fast variables and is designed to illustrate the necessary condition in higher dimension, where the critical manifold does not coincide with the slow invariant manifold. The (3,2) system reads
\[
	\begin{array}{rcl}
		\ds \frac{d}{dt}x_1(t) &=& \ds - \eps x_1(t) \\[0.25cm]
		\ds \frac{d}{dt}x_2(t) &=& \ds - 2\eps x_2(t) \\[0.25cm]
		\ds \frac{d}{dt}x_3(t) &=& \ds - 3\eps x_3(t) \\[0.25cm]
		\ds \frac{d}{dt}y_1(t) &=& \ds  2 x^2_1(t) + x^2_2(t)x_3(t) - 4 y_1(t) \\[0.25cm]
		\ds \frac{d}{dt}y_2(t) &=& \ds  3 x^4_1(t) + x^3_2(t) - 3 y_2(t) \\		
	\end{array}
\]
with $0 < \eps \ll \frac{1}{2}.$ The analytical solution of this system is given by
\begin{eqnarray*}
	x_1(t) &=& c_1 e^{-\eps t} \\[0.25cm]
	x_2(t) &=& c_2 e^{-2 \eps t}  \\[0.25cm]	
	x_3(t) &=& c_3 e^{-3 \eps t}  \\[0.25cm]
	y_1(t) &=& c_4 e^{-4 t} + \frac{c^2_1}{2-\eps} e^{-2 \eps t} + \frac{c_2^2 c_3}{4-7\eps} e^{-7\eps t} \\[0.25cm]
	y_2(t) &=& c_5 e^{-3 t} + \frac{3c^4_1}{3-4\eps} e^{-4 \eps t} + \frac{c_2^3}{3-6\eps} e^{-6\eps t}
\end{eqnarray*}
with constants $c_1,c_2,c_3,c_4,c_5 \in \R$ depending on an initial value. The critical manifold 
\[
	S_0 = \left\{ (x_1,x_2,x_3,y_1,y_2) \in \R^5 \;:\; \begin{pmatrix} y_1 \\y_2 \end{pmatrix} = h_0(x_1,x_2,x_3) = \begin{pmatrix} \frac{1}{2}x^2_1 + \frac{1}{4}x_2^2 x_3 \\[0.15cm]  x^4_1 + \frac{1}{3}x_2^3 \end{pmatrix} \right\}
\]
does not coincide with the slow invariant manifold 
\[
	S_\eps = \left\{ (x_1,x_2,x_3,y_1,y_2) \in \R^5 \;:\; \begin{pmatrix} y_1 \\y_2 \end{pmatrix} = h_\eps(x_1,x_2,x_3) = \begin{pmatrix} \frac{x^2_1}{2-\eps} + \frac{x_2^2 x_3}{4-7\eps} \\[0.15cm]   \frac{3x^4_1}{3-4\eps} + \frac{x_2^3}{3-6\eps} \end{pmatrix} \right\}.
\]
The required parameterization $p$ reads
\[
	\begin{pmatrix}
		y_1 \\ y_2
	\end{pmatrix}	
	= 
	{p(t,x_1,x_2,x_3;a) = \begin{pmatrix}
		p^{(1)}(t,x_1,x_2,x_3;a) \\
		p^{(2)}(t,x_1,x_2,x_3;a)
	\end{pmatrix}}
\]
with
\begin{eqnarray*}
	{p^{(1)}(t,x_1,x_2,x_3;a)} &=& \left(a^{(1)}(x_1e^{\eps t},x_2e^{2\eps t},x_3e^{3\eps t}) - \frac{x_1^2 e^{2\eps t}}{ 2-\eps} - \frac{x_2^2x_3 e^{7\eps t}}{ 4-7\eps} \right) e^{-4t} \\
		&& + \frac{x_1^2}{2-\eps} + \frac{x_2^2 x_3}{4-7\eps} \\
	{p^{(2)}(t,x_1,x_2,x_3;a)} &=& \left(a^{(2)}(x_1e^{\eps t},x_2e^{2\eps t},x_3e^{3\eps t}) - \frac{3x_1^4 e^{4\eps t}}{ 3-4\eps} - \frac{x_2^3 e^{6\eps t}}{ 3-6\eps} \right) e^{-3t} \\
		&& + \frac{3x_1^4}{3-4\eps} + \frac{x_2^3}{3-6\eps}.
\end{eqnarray*}
We identify $(x^1,x^2,x^3,x^4,x^5,x^6) = (t,x_1,x_2,x_3,y_1,y_2)$ and omit the explicit dependencies due to simplicity. The induced metric tensor is
{\small\[
	{\g} = 		\begin{pmatrix} 
							1 + \sum_{\ell=1}^2 (p^{(\ell)}_{t})^2  & p^{(1)}_tp^{(1)}_{x_1} + p^{(2)}_tp^{(2)}_{x_1} & p^{(1)}_tp^{(1)}_{x_2} + p^{(2)}_tp^{(2)}_{x_2}  & p^{(1)}_tp^{(1)}_{x_3} + p^{(2)}_tp^{(2)}_{x_3} \\[0.15cm]
							p^{(1)}_tp^{(1)}_{x_1} + p^{(2)}_tp^{(2)}_{x_1} &  1 + \sum_{\ell=1}^2 (p^{(\ell)}_{x_1})^2   & p^{(1)}_{x_1}p^{(1)}_{x_2} + p^{(2)}_{x_1}p^{(2)}_{x_2} & p^{(1)}_{x_1}p^{(1)}_{x_3} + p^{(2)}_{x_1}p^{(2)}_{x_3}\\[0.15cm]
							p^{(1)}_tp^{(1)}_{x_2} + p^{(2)}_tp^{(2)}_{x_2} & p^{(1)}_{x_1}p^{(1)}_{x_2} + p^{(2)}_{x_1}p^{(2)}_{x_2}&  1 + \sum_{\ell=1}^2 (p^{(\ell)}_{x_2})^2  & p^{(1)}_{x_2}p^{(1)}_{x_3} + p^{(2)}_{x_2}p^{(2)}_{x_3} \\[0.15cm]
							p^{(1)}_tp^{(1)}_{x_3} + p^{(2)}_tp^{(2)}_{x_3} & p^{(1)}_{x_1}p^{(1)}_{x_3} + p^{(2)}_{x_1}p^{(2)}_{x_3} & p^{(1)}_{x_2}p^{(1)}_{x_3} + p^{(2)}_{x_2}p^{(2)}_{x_3} &  1 + \sum_{\ell=1}^2 (p^{(\ell)}_{x_3})^2 
			\end{pmatrix}
\]}
and the time sectional curvatures are given by
\begin{eqnarray*}
	K(\sigma_2) &=& \frac{R^{m}_{122} g_{m1}}{g_{11}g_{22}-g_{12}^2} = \frac{R^{1}_{122} g_{11} + R^{2}_{122} g_{12} + R^{3}_{122} g_{13} + R^{4}_{122} g_{14}}{g_{11}g_{22}-g_{12}^2}  \\
	K(\sigma_3) &=& \frac{R^{m}_{133} g_{m1}}{g_{11}g_{33}-g_{13}^2} = \frac{R^{1}_{133} g_{11} + R^{2}_{133} g_{12} + R^{3}_{133} g_{13} + R^{4}_{133} g_{14}}{g_{11}g_{33}-g_{13}^2} \\
	K(\sigma_4) &=& \frac{R^{m}_{144} g_{m1}}{g_{11}g_{44}-g_{14}^2} = \frac{R^{1}_{144} g_{11} + R^{2}_{144} g_{12} + R^{3}_{144} g_{13} + R^{4}_{144} g_{14}}{g_{11}g_{44}-g_{14}^2}.
\end{eqnarray*}
We investigate the system of partial differential equations 
\begin{eqnarray*}
	p^{(1)}_t &=& 0 \\
	p^{(2)}_t &=& 0.
\end{eqnarray*}
Substitution of $u_1 := x_1e^{\eps t}, u_2 := x_2e^{2\eps t}, u_3 := x_3e^{3\eps t}$ and multiplication of the first equation with $e^{4t}$ and the second equation with $e^{3t}$ leads to the following system
\begin{eqnarray*}
	0 &=& \partial_1 a^{(1)}(u_1,u_2,u_3) u_1 \eps + 2 \partial_2 a^{(1)}(u_1,u_2,u_3) u_2 \eps + 3\partial_3 a^{(1)}(u_1,u_2,u_3) u_3 \eps \\
		&& + \frac{2u_1^2 \eps}{\eps -2} + \frac{7 u_2^2 u_3 \eps}{7\eps -4}  - 4 \left( a^{(1)}(u_1,u_2,u_3) - \frac{u_1^2}{2-\eps} - \frac{u_2^2 u_3}{4-7\eps} \right)\\[0.2cm]
	0 &=& \partial_1 a^{(2)}(u_1,u_2,u_3) u_1 \eps + 2 \partial_2 a^{(2)}(u_1,u_2,u_3) u_2 \eps + 3\partial_3 a^{(2)}(u_1,u_2,u_3) u_3 \eps \\
		&& + \frac{12u_1^4 \eps}{4\eps -3} + \frac{2 u_2^3  \eps}{2\eps -1}  - 3 \left( a^{(2)}(u_1,u_2,u_3) - \frac{3u_1^4}{3-4\eps} - \frac{u_2^3}{3-6\eps} \right).
\end{eqnarray*}
The solution is given by 
\begin{eqnarray*}
	a^{(1)}(u_1,u_2,u_3) &=& \frac{u_1^2}{2-\eps} + \frac{u_2^2 u_3}{4-7\eps} + v_1\left(\frac{u_2}{u_1^2},\frac{u_3}{u_1^3} \right) u_1^{4\eps^{-1}} \\
	a^{(2)}(u_1,u_2,u_3) &=& \frac{3u_1^4}{3-4\eps} + \frac{u_2^3}{3-6\eps} + v_2\left(\frac{u_2}{u_1^2},\frac{u_3}{u_1^3} \right) u_1^{3\eps^{-1}}
\end{eqnarray*}
with two arbitrary functions $v_1(\cdot,\cdot)$ and $v_2(\cdot,\cdot).$ Once we set $v_1 \equiv 0$ and $v_2 \equiv 0,$ it holds $a = h_\eps$ and therefore
\[
	K(\sigma_2) = 0, \;K(\sigma_3) = 0 \quad \text{and} \quad K(\sigma_4) = 0
\]
for all points on $\calM.$ 
Notice that $h_0$ does not have such a structure and the gaussian curvature does not vanish for all points of $\calM$. 
In particular for 
\[
	(t^*,x_1^*,x_2^*,x_3^*,{p^{(1)}(t^*,x_1^*,x_2^*,x_3^*;h_0),p^{(2)}(t^*,x_1^*,x_2^*,x_3^*;h_0)}) \in \calM 
\]
with $\eps = 0.01, t^* = 0, x_1^* = 0.3, x_2^* = 1, x_3^* = 0.5,$ it holds 
\begin{eqnarray*}
	K(\sigma_2)(0,0.3,1,0.5) &\approx& -0.04928 \neq 0, \\
	K(\sigma_3)(0,0.3,1,0.5) &\approx& -0.02973 \neq 0, \\
	K(\sigma_4)(0,0.3,1,0.5) &\approx& -0.01270 \neq 0.		
\end{eqnarray*}
Hence, this is a higher dimensional example where the necessary condition of \cref{thm:neccond} is not fulfilled for the critical manifold.

\section{Investigation on Sufficient Conditions}
\label{sec:sufficient}
The statement of \cref{thm:neccond} is indeed a reformulation of the invariance equation in a differential geometry context and provides possible candidates for the slow invariant manifold or, respectively, allows testing candidate graphs by the sectional curvature criterion. 
In particular, in the case of higher-dimensional manifolds this might be a useful coordinate independent alternative to the invariance equation. 
This section deals with further investigations related to ongoing search for a sufficient differential geometric condition to characterize slow invariant manifolds of arbitrary dimension. 
The ideal case would be a pointwisely formulated additional geometric condition for the manifold graph. 
For illustration we focus on the (1,1)-Davis-Skodje model, see \Cref{sec:sub:ds}, with an analytically known slow invariant manifold.
Recall the initial value functions
\[
	a(u) = \frac{u}{u+1} + c u^\gamma
\]
with an arbitrary constant $c \in \R$ yielding zero time-sectional curvature and thus invariance under the phase flow. 
If we want to finally identify the slow invariant manifold by help of an addition condition, the latter must imply $c = 0$. 
A geometrically motivated idea discussed in our previous publications might related to minimal curvature of the graph. 
Based on the standard curvature notion, which is simple for one-dimensional manifolds, i.e.\ curves, consider the following criterion for a fixed $u$
\[
	\min_c \; F_1(c) := \min_c\; a''(u)^2 = \min_c \; \left(- \frac{2}{(u+1)^2}+\frac{2u}{(u+1)^3}+c \gamma (\gamma-1)u^{\gamma-2}\right)^2.
\]
We minimize the squared curvature in order to avoid problems with changing sign or non-differentiabilty of the absolute value function. Differentiating with respect to $c$ leads to the following result
\begin{eqnarray*}
	F_1'(c) &=& 0 \qquad \LRa \qquad c = \frac{2u^{2-\gamma}}{\gamma(\gamma u^3+3\gamma u^2-u^3+3 \gamma u-3 u^2+\gamma-3 u-1))}\\
	F_1''(c) &=& 2 \left( \gamma(\gamma-1) u^{\gamma-2}\right)^2 > 0 \quad \ \forall \, u \neq 0.
\end{eqnarray*}
For example, if we choose $\gamma = 3.5$ and $u = 2$, it yields $c \approx 0.003$, $c$ small but not zero. 
The classical curvature of the graph does not provide a sufficient condition for the slow invariant manifold. 
The following investigations are motivated from the trajectory-based optimization approach, cf.\ \cite{Lebiedz2010,Lebiedz2011,Lebiedz2016}. 
There, the slow invariant manifold is approximated by minimizing an objective functional under the constraints of the dynamics and the fixation of reaction progress variables as parametrization of the flow manifold. 
We make use of this objective functional considering the pointwise version to obtain the following additional criterion for the slow manifold graph
\[
	\min_c \; F_2(c) := \min_c\; \left\vert\left\vert J(u,a(u))\cdot \begin{pmatrix} f(u,a(u)) \\ g(u,a(u))  \end{pmatrix} \right\vert\right\vert_2^2
\]
for a fixed $u$ and the Jacobian matrix $J$ of the system \cref{eq:ds}. It holds
\[
	F_2'(c) = 0 \quad \LRa \quad c = \frac{u(u-1)}{u^\gamma\gamma^2(u^3+3u^2+3u+1)}
\]
and
\[
	F_2''(c) =2 \left(u^{\gamma} \gamma^2\right)^2 > 0  \quad \ \forall \, u \neq 0.
\]
Once again, we choose $\gamma = 3.5$ and $u = 2$ and obtain $c \approx 0.00053.$ In comparison with the first criterion, the value of $c$ is an order of magnitude smaller and closer to the SIM. 

In \cite{Lebiedz2016}, another criterion is motivated by analogy reasoning in terms of Hamilton's principle of classical mechanics. Consider
\begin{equation}
	\min_c \; F_3(c) := \min_c\;  k_1 \left\vert\left\vert \begin{pmatrix} f(u,a(u)) \\ g(u,a(u))  \end{pmatrix} \right\vert\right\vert_2^2 - k_2  \left\vert\left\vert \begin{pmatrix} u \\ a(u)  \end{pmatrix} \right\vert\right\vert_2^2.
\label{eq:sufficientDS}	
\end{equation}
The first summand can be seen as a 'generalized kinetic energy' and the second summand correspond to some appropriately defined 'generalized potential energy'. The analysis in \cite{Lebiedz2016} reveals the choice $k_1 = 1$ and $k_2 = \frac{\gamma}{u+1}$ in order to exactly identify the slow invariant manifold for the Davis-Skodje model as a time-parameterized solution of the corresponding variational problem minimizing the Lagrangian integral.

In our context here, minimizing the objective function $F_3$ reveals $c = 0$ if $k_1 = 1$ and $k_2 = \frac{\gamma}{u+1}.$ The first derivative of $F_3$ reads
\begin{eqnarray*}
	F_3'(c) &=& 2 k_1 \gamma \; \frac{\gamma u^{2 \gamma+5} c+5 \gamma u^{4+2 \gamma} c+10 \gamma u^{3+2 \gamma} c+10 \gamma u^{2+2 \gamma} c}{(u+1)^5} \\
		&& + 2 k_1\; \gamma \frac{5 \gamma u^{1+2 \gamma} c+\gamma c u^{2 \gamma}+3 u^{2+\gamma}+3 u^{\gamma+3}+u^{4+\gamma}+u^{1+\gamma}}{(u+1)^5} \\
		&& - 2 k_2 \left(\frac{u}{u+1}+c u^\gamma\right) u^\gamma.
\end{eqnarray*}
Solving $F_3'(c) = 0$ yields 
\begin{equation}
	c = - \frac{(\gamma k_1-u k_2-k_2) u}{(\gamma^2 u^2 k_1+2 \gamma^2 u k_1+\gamma^2 k_1-u^2 k_2-2 u k_2-k_2) u^\gamma}.
\label{eq:calcC}	
\end{equation}
For $c=0$ the following equation has to be satisfied 
\[
	(\gamma k_1-u k_2-k_2) u = 0
\]
using only the numerator of \cref{eq:calcC}. This leads to either $u=0$ or
\[
	k_2 = \frac{\gamma k_1}{u+1}
\]
and therefore the derived values of $k_1$ and $k_2$ coincide by setting $k_1 = 1$ with the constants derived in \cite{Lebiedz2016}. Further, it holds
\[
	F''(c) = \frac{2 \gamma (u^{\gamma})^2 (\gamma u+\gamma-1)}{u+1} \neq 0  \quad \forall \quad u \in \R \setminus \left\{0, \frac{1-\gamma}{\gamma}\right\}.
\]

Hence, the slow invariant manifold for the Davis-Skodje model can be sufficiently characterized by
\[
	a = h_\eps \quad \LRa \quad \text{$a$ satisfies \cref{thm:neccond} and $a$ minimizes problem \cref{eq:sufficientDS}.}
\]

\begin{figure}
	\includegraphics[scale=0.24]{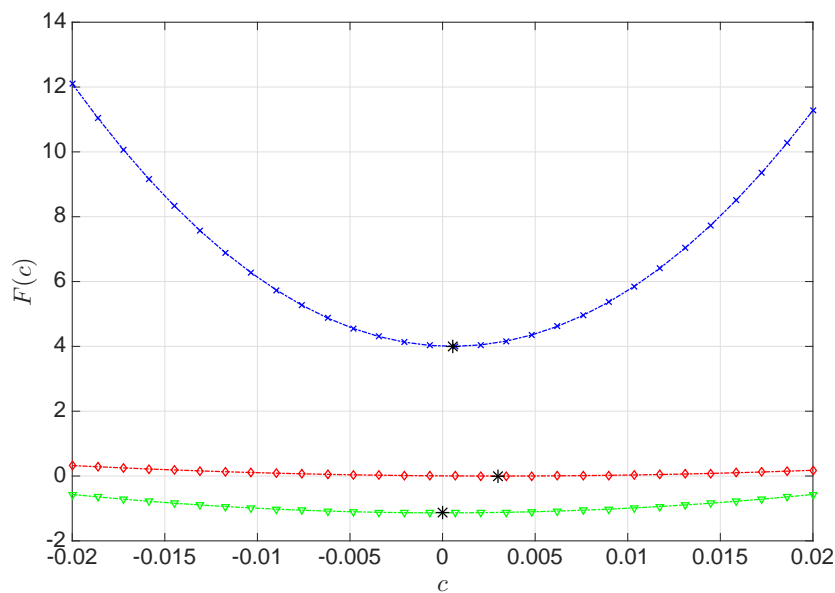}
	\caption{Illustration of each criterion function $F_1$ (red line with diamonds), $F_2$ (blue line with crosses) and $F_3$ (green line with triangles) applied to the (1,1)-Davis-Skodie model having $\gamma = 3.5$ and $u = 2.$ The black stars mark the particular minima.}
	\label{fig:sufficient}
\end{figure}

\cref{fig:sufficient} depicts the graphs of $F_1$ (red line with diamonds), $F_2$ (blue line with crosses) and $F_3$ (green line with triangles) for $c \in [-0.05,0.05].$ The black stars mark the minima.

\section{Summary and Conclusions}
\label{sec:conclusion}
The focus of this work lies on the investigation of a general differential geometric viewpoint in order to characterize slow invariant attracting manifold in multiple time scale dynamical systems. 
A necessary condition is formulated in \Cref{sec:neccond} as a reformulation of the invariance equation with differential geometric terms and is illustrated by means of several examples in \Cref{sec:examples}. 
A discussion on a sufficient condition is presented in \Cref{sec:sufficient}. 
Ideas and investigation are based upon \emph{intrinsic curvature} up to now, an issue for further investigation might be \emph{extrinsic curvature} measuring the curvature of the manifold $\calM$ as a geometric object embedded in the surrounding space. 
The work of Ginoux et al.\ \cite{Ginoux2006,Ginoux2008} uses extrinsic curvature of curves in hyperplanes (codimension-1 manifolds) in order to derive a determinant criterion for computing slow manifold points. 
A generalization to embedded manifolds of arbitrary dimension would be desirable, in the ideal case involving a local geometric criterion that can easily be evaluated numerically. For this aim we propose to consider the slow manifold as a submanifold of the full solution manifold of the ODE flow in an extended { phase}-space time frame and look for a way to define an appropriate metric on this manifold. We conjecture that the SIM could be characterized within that viewpoint by extremals or zeros of an appropriate external curvature notion. Our preliminary studies on sufficient geometric characterization of the slow manifold in simple test models investigated in this work might help for this purpose.

\section*{Acknowledgments}
The authors thank Marcus Heitel for discussions on the topic.

\bibliographystyle{plain}
\bibliography{references}

\begin{thebibliography}{10}

\bibitem{Adrover2007a}
A.~Adrover, F.~Creta, S.~Cerbelli, M.~Valorani, and M.~Giona.
\newblock The structure of slow invariant manifolds and their bifurcational
  routes in chemical kinetic models.
\newblock {\em Computers and Chemical Engineering}, 31(11):1456--1474, 2007.

\bibitem{Adrover2007}
A.~Adrover, F.~Creta, M.~Giona, and M.~Valorani.
\newblock Stretching-based diagnostics and reduction of chemical kinetic models
  with diffusion.
\newblock {\em Journal of Computational Physics}, 225:1442--1471, 2007.

\bibitem{Al-Khateeb2009}
Ashraf~N. Al-Khateeb, Joseph~M. Powers, Samuel Paolucci, Andrew~J. Sommese,
  Jeffrey~A. Diller, Jonathan~D. Hauenstein, and Joshua~D. Mengers.
\newblock One-dimensional slow invariant manifolds for spatially homogenous
  reactive systems.
\newblock {\em Journal of Chemical Physics}, 131(2):024118, July 2009.

\bibitem{Bodenstein1913}
Max Bodenstein.
\newblock Eine {T}heorie der photochemischen {R}eaktionsgeschwindigkeiten.
\newblock {\em Zeitschrift f{\"u}r Physikalische Chemie -- Leipzig},
  85:329--397, 1913.

\bibitem{Brons2013}
Morten Br{{\o}}ns, Mathieu Desroches, and Maciej Krupa.
\newblock {Epsilon-free curvature methods for slow-fast dynamical systems}.
\newblock Research report, 2013.

\bibitem{Kuehn2015}
{C. Kuehn}.
\newblock {\em Mutiple Time Scale Dynamics}.
\newblock Applied Mathematical Sciences. Springer, 2015.

\bibitem{Chapman1913}
David~Leonard Chapman and Leo~Kingsley Underhill.
\newblock The interaction of chlorine and hydrogen. {T}he influence of mass.
\newblock {\em Journal of the Chemical Society, Transactions}, 103:496--508,
  1913.

\bibitem{Chiavazzo2007}
Eliodoro Chiavazzo, Alexander~N. Gorban, and Iliya~V. Karlin.
\newblock Comparison of invariant manifolds for model reduction in chemical
  kinetics.
\newblock {\em Communications in Computational Physics}, 2(5):964--992, October
  2007.

\bibitem{Chiavazzo2011}
Eliodoro Chiavazzo and Ilya Karlin.
\newblock Adaptive simplification of complex multiscale systems.
\newblock {\em Physical Review E}, 83:036706, March 2011.

\bibitem{DavisSkodje1999}
Michael~J. Davis and Rex~T. Skodje.
\newblock Geometric investigation of low-dimensional manifolds in systems
  approaching equilibrium.
\newblock {\em J. Chem. Phys.}, 111(859), 1999.

\bibitem{Fenichel1972}
Neil Fenichel.
\newblock Persistence and smoothness of invariant manifolds for flows.
\newblock {\em Indiana University Mathematics Journal}, 21(3):193--226, 1972.

\bibitem{Fenichel1974}
Neil Fenichel.
\newblock Asymptotic stability with rate conditions.
\newblock {\em Indiana University Mathematics Journal}, 23(12):1109--1137,
  1974.

\bibitem{Fenichel1977}
Neil Fenichel.
\newblock Asymptotic stability with rate conditions ii.
\newblock {\em Indiana University Mathematics Journal}, 26(1):81--93, 1977.

\bibitem{Fenichel1979}
Neil Fenichel.
\newblock Geometric singular perturbation theory for ordinary differential
  equations.
\newblock {\em Journal of Differential Equations}, 31:53--98, 1979.

\bibitem{Gear2005}
C.~W. Gear, T.~J. Kaper, I.~G. Kevrekidis, and A.~Zagaris.
\newblock Projecting to a slow manifold: Singularly perturbed systems and
  legacy codes.
\newblock {\em SIAM Journal on Applied Dynamical Systems}, 4(3):711--732, 2005.

\bibitem{Ginoux2009}
J.-M. Ginoux.
\newblock {\em Differential Geometry Applied to Dynamical Systems}, volume~66
  of {\em World Scientific series on nonlinear science: Monographs and
  treatises}.
\newblock World Scientific, 2009.

\bibitem{Ginoux2014}
Jean-Marc Ginoux.
\newblock The slow invariant manifold of the lorenz--krishnamurthy model.
\newblock {\em Qualitative Theory of Dynamical Systems}, 13(1):19--37, 2014.

\bibitem{Ginoux2006}
Jean-Marc Ginoux and Bruno Rossetto.
\newblock Differential geometry and mechanics: Applications to chaotic
  dynamical systems.
\newblock {\em International Journal of Bifurcation and Chaos}, 16(4):887--910,
  2006.

\bibitem{Ginoux2008}
Jean-Marc Ginoux and Bruno Rossetto.
\newblock Slow invariant manifolds as curvature of the flow of dynamical
  systems.
\newblock {\em International Journal of Bifurcation and Chaos},
  18(11):3409--3430, 2008.

\bibitem{Gorban2005}
Alexander~N. Gorban and Ilya~V. Karlin.
\newblock {\em Invariant Manifolds for Physical and Chemical Kinetics}, volume
  660 of {\em Lecture Notes in Physics}.
\newblock Springer-Verlag Berlin Heidelberg New York, 2005.

\bibitem{Heiter2017}
Pascal Heiter.
\newblock {\em Curvature based criteria for slow invariant manifold
  computation: from differential geometry to numerical software implementations
  for model reduction in hydrocarbon combustion}.
\newblock 2017.

\bibitem{Jones1995}
Christopher K. R.~T. Jones.
\newblock {\em Geometric singular perturbation theory}, pages 44--118.
\newblock Springer Berlin Heidelberg, Berlin, Heidelberg, 1995.

\bibitem{Kaper1999}
Tasso~J. Kaper.
\newblock An introduction to geometric methods and dynamical systems theory for
  singular perturbation problems.
\newblock In Jane Cronin, editor, {\em Analyzing multiscale phenomena using
  singular perturbation methods}, volume~56 of {\em Proceedings of Symposia in
  Applied Mathematics}, pages 85--124. American Mathematical Society,
  Providence, RI, 1999.

\bibitem{Kevrekidis2003}
Ioannis~G. Kevrekidis, C.~William Gear, James~M. Hyman, Panagiotis~G.
  Kevrekidis, Olof Runborg, and Constantinos Theodoropoulos.
\newblock Equation-free, coarse-grained multiscale computation: Enabling
  microscopic simulators to perform system-level analysis.
\newblock {\em Communications in Mathematical Sciences}, 1(4):715--762, 2003.

\bibitem{Lam1985}
S.~H. Lam.
\newblock Singular perturbation for stiff equations using numerical methods.
\newblock In Corrado Casci and Claudio Bruno, editors, {\em Recent Advances in
  the Aerospace Sciences}, pages 3--20. Plenum Press, New~York, London, 1985.

\bibitem{Lam1994}
S.~H. Lam and D.~A. Goussis.
\newblock The {CSP} method for simplifying kinetics.
\newblock {\em International Journal of Chemical Kinetics}, 26:461--486, 1994.

\bibitem{Lebiedz2004}
D.~Lebiedz.
\newblock {Computing Minimal Entropy Production Trajectories: An Approach to
  Model Reduction in Chemical Kinetics}.
\newblock {\em Journal of Chemical Physics}, 120:6890--6897, 2004.

\bibitem{Lebiedz2013}
D.~Lebiedz and J.~Siehr.
\newblock Simplified reaction models for combustion in gas turbine combustion
  chambers.
\newblock In J.~Janicka, A.~Sadiki, M.~Sch{\"a}fer, and C.~Heeger, editors,
  {\em Flow and Combustion in Advanced Gas Turbine Combustors}, chapter~5,
  pages 161--182. Springer Netherlands, Dordrecht, 2013.

\bibitem{Lebiedz2014}
D.~Lebiedz and J.~Siehr.
\newblock An optimization approach to kinetic model reduction for combustion
  chemistry.
\newblock {\em Flow, Turbulence and Combustion}, 92(4):885--902, 2014.

\bibitem{Lebiedz2016}
D.~Lebiedz and J.~Unger.
\newblock On unifying concepts for trajectory-based slow invariant attracting
  manifold computation in kinetic multiscale models.
\newblock {\em Mathematical and Computer Modelling of Dynamical Systems},
  22(2):87--112, 2016.

\bibitem{Lebiedz2006b}
Dirk Lebiedz, Volkmar Reinhardt, and Julia Kammerer.
\newblock Novel trajectory based concepts for model and complexity reduction in
  (bio)chemical kinetics.
\newblock In A.~N. Gorban, N.~Kazantzis, I.~G. Kevrekidis, and
  C.~Theodoropoulos, editors, {\em Model reduction and coarse-graining
  approaches for multi-scale phenomena}, pages 343--364. Springer, Berlin,
  2006.

\bibitem{Lebiedz2010}
Dirk Lebiedz, Volkmar Reinhardt, and Jochen Siehr.
\newblock Minimal curvature trajectories: Riemannian geometry concepts for slow
  manifold computation in chemical kinetics.
\newblock {\em Journal of Computational Physics}, 229(18):6512--6533, September
  2010.

\bibitem{Lebiedz2011}
Dirk Lebiedz, Volkmar Reinhardt, Jochen Siehr, and Jonas Unger.
\newblock Geometric criteria for model reduction in chemical kinetics via
  optimization of trajectories.
\newblock In Alexander~N. Gorban and Dirk Roose, editors, {\em Coping with
  Complexity: Model Reduction and Data Analysis}, number~75 in Lecture Notes in
  Computational Science and Engineering, pages 241--252. Springer, Heidelberg,
  first edition, 2011.

\bibitem{Lebiedz2013a}
Dirk Lebiedz and Jochen Siehr.
\newblock A continuation method for the efficient solution of parametric
  optimization problems in kinetic model reduction.
\newblock {\em SIAM Journal on Scientific Computing}, 35(3):A1548--A1603, 2013.

\bibitem{Lebiedz2011a}
Dirk Lebiedz, Jochen Siehr, and Jonas Unger.
\newblock A variational principle for computing slow invariant manifolds in
  dissipative dynamical systems.
\newblock {\em SIAM Journal on Scientific Computing}, 33(2):703--720, 2011.

\bibitem{Maas1992}
U.~Maas and S.~B. Pope.
\newblock Simplifying chemical kinetics: Intrinsic low-dimensional manifolds in
  composition space.
\newblock {\em Combustion and Flame}, 88:239--264, 1992.

\bibitem{Mease2016}
K.D. Mease, U.~Topcu, E.~Aykutlu{\u g}, and M.~Maggia.
\newblock Characterizing two-timescale nonlinear dynamics using finite-time
  lyapunov exponents and subspaces.
\newblock {\em Communications in Nonlinear Science and Numerical Simulation},
  36:148 -- 174, 2016.

\bibitem{Michaelis1913}
L.~Michaelis and M.~L. Menten.
\newblock Die {K}inetik der {I}nvertinwirkung.
\newblock {\em Biochemische Zeitschrift}, 49:333--369, 1913.

\bibitem{Ren2006a}
Z.~Ren, S.~B. Pope, A.~Vladimirsky, and J.~M. Guckenheimer.
\newblock The invariant constrained equilibrium edge preimage curve method for
  the dimension reduction of chemical kinetics.
\newblock {\em Journal of Chemical Physics}, 124:114111, 2006.

\bibitem{Ren2005}
Z.~Ren and S.B. Pope.
\newblock Species reconstruction using pre-image curves.
\newblock In {\em Proceedings of the Combustion Institute}, volume~30, pages
  1293--1300, 2005.

\bibitem{Roussel2012}
Marc~R. Roussel.
\newblock Further studies of the functional equation truncation approximation.
\newblock {\em Canadian Applied Mathematics Quarterly}, 20(2):209--227, 2012.

\bibitem{Roussel2006}
Marc~R. Roussel and Terry Tang.
\newblock The functional equation truncation method for approximating slow
  invariant manifolds: A rapid method for computing intrinsic low-dimensional
  manifolds.
\newblock {\em Journal of Chemical Physics}, 125:214103, 2006.

\bibitem{Theodoropoulos2000}
Constantinos Theodoropoulos, Yue-Hong Qian, and Ioannis~G. Kevrekidis.
\newblock Coarse stability and bifurcation analysis using time-steppers: A
  reaction-diffusion example.
\newblock {\em Proceedings of the National Academy of Sciences},
  97(18):9840--9843, 2000.

\bibitem{Transtrum2014}
Mark~K Transtrum and Peng Qiu.
\newblock Model reduction by manifold boundaries.
\newblock {\em Physical review letters}, 113(9):098701, 2014.

\bibitem{Transtrum2016}
Mark~K. Transtrum and Peng Qiu.
\newblock Bridging mechanistic and phenomenological models of complex
  biological systems.
\newblock {\em PLOS Computational Biology}, 12(5):1--34, 05 2016.

\bibitem{Valorani2009}
M.~Valorani and S.~Paolucci.
\newblock The g-scheme: A framework for multi-scale adaptive model reduction.
\newblock {\em Journal of Computational Physics}, 228(13):4665--4701, 2009.

\bibitem{Zagaris2009}
Antonios Zagaris, C.~William Gear, Tasso~Joost Kaper, and Yannis~G. Kevrekidis.
\newblock Analysis of the accuracy and convergence of equation-free projection
  to a slow manifold.
\newblock {\em ESAIM: Mathematical Modelling and Numerical Analysis},
  43(4):757--784, 2009.

\end{thebibliography}

\end{document}